\documentclass[11pt]{article}

\usepackage[utf8]{inputenc}
\usepackage[english]{babel}
\usepackage{amsthm}
\usepackage{amsmath}
\usepackage{amssymb}
\usepackage{enumerate}
\usepackage{cite}
\usepackage{graphicx}
\usepackage{tikz}

\theoremstyle{plain}
\newtheorem{theorem}{Theorem}

\newtheorem{corollary}[theorem]{Corollary}

\theoremstyle{definition}

\theoremstyle{remark}
\newtheorem{remark}[theorem]{Remark}

\newcommand{\F}{\mathbb{F}}
\newcommand{\fp}{\F_p}

\title{On the pseudorandomness of automatic sequences}

\author{L\'aszl\'o M\'erai and Arne Winterhof\\
  \\ \small
  Johann Radon Institute for Computational and Applied Mathematics\\ \small
  Austrian Academy of Sciences\\ \small
  Altenbergerstr.\ 69,
  4040 Linz, Austria\\ \small
  \texttt{\{laszlo.merai,arne.winterhof\}@oeaw.ac.at}
}

\date{}

\begin{document}

\maketitle

\let\thefootnote\relax\footnote{The authors are partially supported by the Austrian Science Fund FWF Project 5511-N26 which is part of the Special Research Program ''Quasi-Monte Carlo Methods: Theory and Applications''.\\
“This is a pre-print of an article published in Cryptography and Communications. The final authenticated version is available online at: https://doi.org/10.1007/s12095-017-0260-7”.
}

\begin{abstract}
We study the pseudorandomness of automatic sequences in terms of well-distribution and correlation measure of order 2. We detect non-random behavior which can be derived either from the functional equations satisfied by their generating functions or from their generating finite automatons, respectively.
\end{abstract}

\textit{2000 Mathematics Subject Classification:} 11K45, 03D05, 68Q25,  68Q70

\textit{Keywords and phrases:} finite automaton, automatic sequences, correlation measure, pseudorandom
sequences, Thue-Morse sequence, state complexity


\section{Introduction}

Let $k\geq 2$ be an integer. A $k$-automatic sequence $(s_n)$ over an alphabet $\Sigma$ is the output sequence of a finite automaton, where the input is the $k$-ary digital expansion of $n$.
Automatic sequences have gained much attention during the last decades. For monographs and surveys about automatic sequences we refer to \cite{al,alsh,dr,RecurenceSequences}.

For a prime $k=p$, $p$-automatic sequences $(s_n)$ over the finite field $\F_p$ of $p$ elements can be characterized by a result of Christol \cite{christol}, see also \cite{chka}:
Let 
\[
G(x)=\sum_{n=0}^{\infty}s_nx^n
\]
be the {\em generating function} of the sequence $(s_n)$ over $\F_p$. 
Then $(s_n)$ is {\em $p$-automatic over $\F_p$} if and only if $G(x)$ is algebraic over $\fp[x]$, that is, there is a characteristic polynomial 
$0\ne h(x,y)\in \F_p[x,y]$ such that $h(x,G(x))=0$.

For example, the {\em Thue-Morse sequence} over $\F_2$ is defined by
\[
t_n=\left\{ \begin{array}{cl} t_{n/2} & \mbox{if $n$ is even},\\ t_{(n-1)/2}+1 & \mbox{if $n$ is odd},
              \end{array}\right.\quad n=1,2,\ldots
\]
with initial value $t_0=0$. Taking
\[
h(x,y)=(x+1)^3 y^2 +(x+1)^2y+x,
\]
its generating function $G(x)$ satisfies $h(x,G(x))=0$.
 
Any $p$-automatic sequence over $\F_p$ which is not ultimately periodic, that is, its generating function $G(x)$ is not rational, passes some unpredictability tests. 
In particular, it has large linear complexity profile. This can be expressed in terms of the local degrees of $h(x,y)$. 

We recall that the \textit{$N$th linear complexity} $L(s_n, N)$ of a sequence $(s_n)$ over $\F_p$ is the length~$L$ of a shortest linear recurrence relation satisfied by the first $N$ elements of $(s_n)$:
\[
 s_{n+L}=c_{L-1}s_{n+L-1}+\dots +c_1s_{n+1}+c_0s_n, \quad 0\leq n\leq N-L-1,
\]
for some $c_0,\ldots,c_{L-1}\in \F_q$.  
We use the convention that $L(s_n,N)=0$ if the first $N$ elements of $(s_n)$ are all zero and $L(s_n,N)=N$ if $s_0=\dots=s_{N-2}=0\ne s_{N-1}$.

For a random sequence $(s_n)$ we have
\begin{equation}\label{eq:random}
 L(s_n,N)=\frac{N}{2}+O(\log N) \quad \text{for all } N\geq 2,
\end{equation}
see \cite{Niederreiter88}.

The authors proved in \cite{MeraiWinterhof2016+} that any $p$-automatic sequence over $\F_p$ 
which is not ultimately periodic has $N$th linear complexity of (best possible) order of magnitude $N$,  that is constant times $N$, where the implied constant depends on the degree of a characteristic polynomial $h(x,y)$ of $G(x)$.
 
Especially, the $N$th linear complexity of sequences with $h(x,G(x))=0$ and local degree 2 in $y$ of $h(x,y)$ satisfy \eqref{eq:random}. For example, \cite[Theorem 1]{MeraiWinterhof2016+} applied to the Thue-Morse sequence gives
$$\left\lceil\frac{N-1}{2}\right\rceil\le L(t_n,N)\le \left\lceil\frac{N-1}{2}\right\rceil+1.$$
(The exact value $L(t_n,N)=2\left\lfloor \frac{N+2}{4}\right\rfloor$ can be obtained using a different method, see also \cite{MeraiWinterhof2016+}.)

Although automatic sequences have large linear complexity profile, they are statistically distinguishable from random sequences if $N$ is sufficiently large and certain pseudorandom measures are of (worst possible) order of magnitude $N$.

For a given finite sequence $(s_n)$ over $\F_2$ write
\[
U\left(s_n,t,a,b\right)=\sum_{j=0}^{t-1} (-1)^{s_{a+jb}},
\]
and for $D=(d_1, \dots, d_k)$ with non-negative integers $0\leq d_1<\dots <d_k$ write
\[
V\left(s_n,M,D\right)=\sum_{n=0}^{M-1} (-1)^{s_{n+d_1}+s_{n+d_2}+\dots +s_{n+d_{k}}}.
\]
Then the  \textit{$N$th well-distribution measure} of $(s_n)$ is 
\[
W\left(s_n,N\right)=
\max _{a,b,t} \left| U\left(s_n,t,a,b\right)\right|=
\max _{a,b,t} \left|
\sum_{j=0}^{t-1} (-1)^{s_{a+jb}} \right|,
\]
where the maximum is taken over all $a,b,t \in \mathbb{N}$ such that
$0 \leq a \leq a+(t- 1)b< N$, and the  \textit{$N$th correlation measure
of order $k$} of $(s_n)$ is 
\[
C_{k}\left(s_n,N\right)=\max_{M,D}\left| V\left(s_n,M,D\right) \right|=
\max_{M,D}\left|
\sum_{n=0}^M (-1)^{s_{n+d_1}+s_{n+d_2}+\dots +s_{n+d_{k}}} \right|,
\]
where the maximum is taken over all $D$ and $M$
such that $ d_{k} +M < N$. For more background on pseudorandom measures see \cite{Gyarmati-survey,sa,towi}.

The sequence $(s_n)$ possesses good properties of pseudorandomness
if both these measures $W\left(s_n,N\right)$ and $C_{k}\left(s_n,N\right)$ (at least for small
$k$) are `small' in terms of $N$ (in particular, both are $o(N)$ as
$N \rightarrow \infty$).
This terminology is justified since for a truly random sequence
$(s_n)_{n=0}^{N-1}$ each of these measures is $ N^{1/2} (\log N)^{O(1)}$. (For a more
precise version of this result see \cite{AKMMR}.) The Legendre sequence is an example of such a pseudorandom sequence with both small well-distribution and correlation measures, see \cite{masa-legendre}.

%

In Section~\ref{well} we show that a certain family of $2$-automatic sequences classified by its functional equation for its generating function $h(x,G(x))=0$ suffers a large well-distribution measure.
This family includes the Baum-Sweet sequence and the characteristic sequence of the set of sums of three integer squares.
In Section~\ref{corr} we show that another family, again characterized by $h(x,G(x))=0$ for a certain class of polynomials $h(x,y)$, is of large correlation measure of order 2. 
This family includes pattern sequences such as the Thue-Morse sequence
as well as the Rudin-Shapiro sequence and the regular paperfolding sequence. 

Note that it is known that both the Thue-Morse sequence and the Rudin-Shapiro sequence have a large correlation measure of order $2$, see \cite{masa}. 

In Section~\ref{sec:general} we prove another bound on the correlation measure of order 2 of any 2-automatic sequence in terms of the number of states of its generating finite automaton. Roughly speaking, if the number of states of the finite automaton is small, the correlation measure of order 2 of the corresponding sequence is large. However, the results of Section~\ref{corr} are slightly stronger but apply only to some special automatic sequences.

On the other hand, our last result implies that for any automatic sequence
with small correlation measure of order $2$, its state complexity has to be
large. In particular, we apply this result to the Legendre sequence.

\section{Sequences with large well-distribution measure}
\label{well}

First we mention two sequences with large well-distribution measure, the Baum-Sweet sequence and the characteristic sequence of the set of sums of three squares. 
Then we show that these sequences belong to a larger family of sequences with a certain type of polynomials $h(x,y)$ with 
$h(x,G(x))=0$ which all have a large well-distribution measure. 

\subsubsection*{Baum-Sweet sequence}
The Baum-Sweet sequence $(b_n)$ is a $2$-automatic sequence defined by the rule $b_0=1$ and for $n\ge 1$
$$
 b_n=
 \left\{ 
 \begin{array}{cl}
  1& \text{if the binary representation of $n$ contains no block of} \\
   &  \text{consecutive $0$'s of odd length,}\\
  0& \text{otherwise.}
 \end{array}
 \right.
$$
Equivalently, we have for $n\ge 1$ of the form $n=4^km$ with $m$ not divisible by $4$
\begin{equation*}
b_n=\left\{\begin{array}{ll} 0 & \mbox{if $m$ is even},\\ b_{(m-1)/2} & \mbox{if $m$ is odd}.\end{array}\right.
\end{equation*}

The Baum-Sweet sequence satisfies
\begin{equation}\label{bs}
  W(b_n,N)\geq \left|\sum_{n=0}^{\lfloor (N-3)/4\rfloor} (-1)^{b_{4n+2}}\right|=\left\lfloor \frac{N+1}{4}\right\rfloor \quad \text{for } N\geq 1.
\end{equation}
The generating function $G(x)$ of $(b_n)$ satisfies 
$$h(x,G(x))=0 \quad\mbox{with} \quad h(x,y)=y^4+xy^2+y.$$ 

\subsubsection*{The characteristic sequence of the set of sums of three
squares}

Let $(u_n)$ be the characteristic sequence of non-negative integers that can be written as a sum of three squares
\[
 u_n=
 \left\{
 \begin{array}{cl}
  1 & \text{if $n=u^2+v^2+w^2$ for some integers $u,v,w$,}\\
  0 & \text{otherwise.}
 \end{array}
 \right.
\]
By the Three-Square Theorem this is equivalent to
\[
 u_n=
 \left\{
 \begin{array}{cl}
  0 & \text{if there exist non-negative integers $a,k$ with $n=4^a(8k+ 7)$,}\\
  1 & \text{otherwise.}
 \end{array}
 \right.
\]
We have
\begin{equation}\label{cs} 
W(u_n,N)\geq \left|\sum_{n=0}^{\lfloor N/8\rfloor -1} (-1)^{u_{8n+7}}\right|=\left\lfloor \frac{N}{8}\right\rfloor\quad \text{for } N\geq 1.
\end{equation}
The generating function $G(x)$ of $(u_n)$ satisfies $h(x,G(x))=0$ with $$h(x,y)=(x^8+1)y^4+(x^8+1)y+x^6+x^5+x^3+x^2+x,$$
see \cite{howi}.

We present some generalizations of $(\ref{bs})$ and $(\ref{cs})$. 
\begin{theorem}
Let $(s_n)$ be a sequence over $\F_2$ with generating function $G(X)$ satisfying $h(x,G(x))=0$ for some 
polynomial $h(x,y)$ over $\F_2$ of the form
\begin{equation}\label{form1} 
h(x,y)=f_2(y^{2^\ell})+xf_1(y^2)+y+f_0(x)
\end{equation}
with polynomials $f_0,f_1,f_2$ over $\F_2$, $\deg f_0\le 2^\ell-3$, and $\ell\ge 2$. 
Then we have 
$$W(s_n,N)\ge \left\lfloor \frac{N+1}{2^\ell}\right\rfloor.$$
If $h(x,y)$ is of the form
\begin{equation}\label{form2}
h(x,y)=f_1(x^2,y^2)+(x^{2^\ell}+1)y+f_0(x)
\end{equation}
with polynomials $f_1$ and $f_0$ over $\F_2$ with $\deg f_0\le 2^\ell-2$ and $\ell\ge 1$,
then we have
$$W(s_n,N)\ge \left\lfloor \frac{N}{2^\ell}\right\rfloor.$$
\end{theorem}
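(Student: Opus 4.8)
The plan is to prove both lower bounds by exhibiting, in each case, a single arithmetic progression $a,a+b,\dots,a+(t-1)b$ along which the sign $(-1)^{s_n}$ is \emph{constant}; then $|U(s_n,t,a,b)|=t$, and it remains only to count how many admissible terms such a progression has in $[0,N)$. The algebraic input is the characteristic equation $h(x,G(x))=0$, read coefficient-by-coefficient. The crucial simplification is the Frobenius identity over $\F_2$: since $s_n^2=s_n$, we have $G(x)^{2^k}=G(x^{2^k})$ for every $k$, so substituting $y=G(x)$ turns each block $f_2(y^{2^\ell})$, $x f_1(y^2)$, $f_1(x^2,y^2)$ into an honest power series whose exponents lie in a prescribed residue class modulo a power of $2$.

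For the first form \eqref{form1}, after substitution the term $f_2(G(x^{2^\ell}))$ supports only exponents divisible by $2^\ell$, while $x f_1(G(x^2))$ supports only \emph{odd} exponents. Hence, comparing the coefficient of $x^n$ for an exponent $n$ that is \emph{even} but \emph{not} divisible by $2^\ell$, both contributions drop out and one is left with $s_n=[x^n]f_0(x)$. Since $\deg f_0\le 2^\ell-3$, every such $n$ with $n\ge 2^\ell-2$ forces $s_n=0$. I would therefore take $a=2^\ell-2$, $b=2^\ell$: each term $n=(j+1)2^\ell-2$ is even, congruent to $-2$ and hence (as $\ell\ge 2$) not divisible by $2^\ell$, and at least $2^\ell-2$, so $(-1)^{s_n}=1$ throughout and $U=t$.

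For the second form \eqref{form2}, the substituted term $f_1(x^2,G(x^2))$ is a series in $x^2$, hence supports only even exponents, whereas $(x^{2^\ell}+1)G(x)=\sum_n(s_n+s_{n-2^\ell})x^n$. Comparing the coefficient of $x^n$ at an \emph{odd} exponent $n>\deg f_0$ (so $n\ge 2^\ell-1$) gives $s_n+s_{n-2^\ell}=0$, i.e.\ $s_n=s_{n-2^\ell}$. Thus $(-1)^{s_n}$ is constant along each odd residue class modulo $2^\ell$ once one is past the initial segment, and I would take $a=2^\ell-1$, $b=2^\ell$; telescoping the relation shows every term equals $s_{2^\ell-1}$, so $|U|=t$ regardless of that (unknown) common value.

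The only genuinely delicate points are bookkeeping rather than ideas. First, the starting abscissa must be chosen large enough that $f_0$ never interferes with \emph{any} term of the progression; this is exactly why $a=2^\ell-2$ and $a=2^\ell-1$ (the largest even, resp.\ odd, residue below $2^\ell$) are used, so that the inequality $n>\deg f_0$ already holds at $j=0$. Second, one must convert the count of admissible $j$ into the stated floors: for $b=2^\ell$ and these choices of $a$, the number of $j\ge 0$ with $(j+1)2^\ell-c<N$ equals $\lceil(N+c)/2^\ell\rceil-1=\lfloor(N+c-1)/2^\ell\rfloor$ (with $c=2$ giving $\lfloor(N+1)/2^\ell\rfloor$ and $c=1$ giving $\lfloor N/2^\ell\rfloor$), and one checks this remains valid, and the admissibility constraint $0\le a\le a+(t-1)b<N$ is met, for all $N\ge 1$. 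I expect these elementary floor identities to be the main, if routine, obstacle.
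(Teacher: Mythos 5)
Your proposal is correct and follows essentially the same route as the paper: compare coefficients of $h(x,G(x))=0$ at exponents $2^\ell n+2^\ell-2$ (resp.\ $2^\ell n+2^\ell-1$), using the Frobenius identity to see that the other terms contribute nothing there, conclude $s_n=0$ (resp.\ $s_n=s_{n-2^\ell}$) along that progression, and count. Your version merely makes explicit the support argument (which exponent classes each block of $h$ can occupy) and the floor-function bookkeeping that the paper leaves implicit.
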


\begin{proof}
First let $h(x,y)$ be of the form (\ref{form1}).
Comparing the coefficients of $h(x,G(x))=0$ at $x^{2^\ell n+2^\ell-2}$ we see that $s_{2^\ell n+2^\ell-2}=0$. Hence, 
$$\sum_{n=0}^{M-1} (-1)^{s_{2^\ell n+2^\ell-2}}=M.$$
Taking 
$$M=\left\lfloor \frac{N+1}{2^\ell}\right\rfloor$$
gives the first result.

If $h(x,y)$ is of the form (\ref{form2}), we compare the coefficients at $x^{2^\ell n+2^\ell-1}$ and get
$s_{2^\ell-1}=0$ and $s_{2^\ell (n+1)+2^\ell-1}=s_{2^\ell n+2^\ell-1}$ for $n\ge 0$ and the result follows analogously.
\end{proof}

\section{Large correlation measure of order 2 obtained from a characteristic polynomial}
\label{corr}

Now we prove a lower bound on the correlation measure of order $2$ for a large class of automatic sequences.

\begin{theorem}\label{thm:2}
 Let $(s_n)$ be a sequence over $\F_2$ with generating function $G(x)$ satisfying 
 $$h(x,G(x))=0$$ for some polynomial $h(x,y)$ of the form
\[
h(x,y)=(x+1)^{2^\ell}((a_1x+a_0)y^2+ y)+ f(x) 
\]
with $\ell \ge 0$, $\deg f \le 2^\ell-1$, and $(a_1,a_0)\neq(0,0)$.  
Then we have 
$$C_2(s_n,N)>\frac{N}{2^\ell+2}-2\quad \mbox{for }N\ge 2^{\ell+1}+4.$$
\end{theorem}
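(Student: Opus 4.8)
The plan is to turn the functional equation into an explicit recurrence and then exhibit one pair $D=(0,2^k)$ whose correlation sum is essentially $\pm M$. Writing $(x+1)^{2^\ell}=x^{2^\ell}+1$ over $\F_2$ and setting $H(x)=(a_1x+a_0)G(x)^2+G(x)$, the hypothesis $h(x,G(x))=0$ reads $(x^{2^\ell}+1)H=f$. Since $\deg f\le 2^\ell-1$ and $(x^{2^\ell}+1)^{-1}=\sum_{j\ge0}x^{2^\ell j}$, the power series $H=f\cdot\sum_{j\ge0}x^{2^\ell j}$ has $2^\ell$-periodic coefficients, so the coefficient $h_m$ of $x^m$ equals $f_{m\bmod 2^\ell}$, where $f=\sum_{i<2^\ell}f_ix^i$. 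Comparing this with $H=(a_1x+a_0)G^2+G$ and using $G^2=\sum_n s_nx^{2n}$ yields, exactly in the coefficient-comparison style of the previous theorem, the recurrence $s_{2t+\epsilon}=a_\epsilon s_t+f_{(2t+\epsilon)\bmod 2^\ell}$ for $\epsilon\in\{0,1\}$.

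Next I would introduce the shifted differences $\delta_j(n)=s_n+s_{n+2^j}$ and prove the self-similarity identity: for every $j\ge\ell$ one has $\delta_j(2t)=a_0\,\delta_{j-1}(t)$ and $\delta_j(2t+1)=a_1\,\delta_{j-1}(t)$. This holds because $2^\ell\mid 2^j$ forces $h_m=h_{m+2^j}$; expanding $h_m+h_{m+2^j}=0$ through the recurrence makes the inhomogeneous $f$-terms cancel (the two arguments are congruent mod $2^\ell$), leaving the stated relation. Iterating from level $k$ down to level $\ell$ then gives, for $n$ in the dyadic block $0\le n<2^{k-\ell+1}$ (where $\lfloor n/2^{k-\ell+1}\rfloor=0$), the closed form $\delta_k(n)=\bigl(\prod_{i=0}^{k-\ell}a_{\epsilon_i(n)}\bigr)\,\delta_{\ell-1}(0)$, where $\epsilon_i(n)$ is the $i$-th binary digit of $n$.

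The correlation is then read off by a short case analysis on $(a_1,a_0)\neq(0,0)$, applied with $M=2^{k-\ell+1}$. If $a_0=a_1=1$ the product is identically $1$, so $\delta_k$ is constant on the whole block and $\lvert V(s_n,M,(0,2^k))\rvert=M$. If exactly one $a_\epsilon$ vanishes, the product equals $1$ for the single $n$ in the block all of whose relevant digits avoid that $\epsilon$, and $0$ otherwise, so $\delta_k(n)=0$ off one point and $\lvert V\rvert\ge M-2$. In either case $C_2(s_n,N)\ge M-2$. Finally I would optimize $k$: since $2^k=2^{\ell-1}M$, the admissibility constraint $2^k+M<N$ becomes $M<2N/(2^\ell+2)$, so taking $M$ to be the largest admissible power of two gives $M>N/(2^\ell+2)$ and hence $C_2(s_n,N)>N/(2^\ell+2)-2$; the hypothesis $N\ge 2^{\ell+1}+4$ guarantees that such a $k\ge\ell$ exists. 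The boundary case $\ell=0$ (where $\delta_{\ell-1}$ is not defined) forces $(s_n)$ to be eventually periodic, and there the bound is immediate.

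The hard part will be the self-similarity identity and its iteration: verifying that the inhomogeneous terms genuinely cancel at each level (which relies on $2^\ell\mid 2^j$) and that the telescoped product of digits is exactly what governs constancy, together with the bookkeeping of the single exceptional index that accounts for the ``$-2$'' in the bound.
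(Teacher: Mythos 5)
Your argument for $\ell\ge1$ is correct and, while built on the same engine as the paper's, packages it differently. The paper obtains exactly your self-similarity identities $\delta_j(2t+\epsilon)=a_\epsilon\delta_{j-1}(t)$ (its displayed relations for even and odd indices, with $j=k+\ell$) by multiplying $h(x,G(x))=0$ by $(x+1)^{2^{k+\ell}-2^\ell}$ and comparing coefficients above the degree of $(x+1)^{2^{k+\ell}-2^\ell}f(x)$, rather than by inverting $x^{2^\ell}+1$ as a power series; it then sums $(-1)^{s_n+s_{n+2^{k+\ell}}}$ over $n\in[0,2^k)$ and solves the linear recursion $\gamma_k=(a_0+a_1)\gamma_{k-1}+(2-a_0-a_1)2^{k-1}$ instead of writing your explicit digit product. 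Your choice of window is in fact the sharper one: for the same lag you take one extra halving step and sum over a block twice as long, which is what actually delivers the stated constant $N/(2^\ell+2)$, whereas the paper's choice $2^M\le N/(2^\ell+1)<2^{M+1}$ only guarantees $2^M>N/(2^{\ell+1}+2)$. (Minor point: the largest admissible power of two gives $M\ge N/(2^\ell+2)$, not necessarily a strict inequality.)

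The genuine gap is your treatment of $\ell=0$. The claim that the hypothesis then forces $(s_n)$ to be eventually periodic is false: the paper's own example $(w_n)$ with $h(x,y)=(x+1)(xy^2+y)+1$ is precisely the case $\ell=0$, $a_1=1$, $a_0=0$, and that sequence has a perfect linear complexity profile, so it is certainly not eventually periodic. The real obstruction is that your descent requires every level $j$ it passes through to satisfy both $j\ge\ell$ (so the inhomogeneous $f$-terms cancel) and $j\ge1$ (so that $2t+2^j$ is again even); for $\ell=0$ you can therefore only descend to level $0$, obtaining $\delta_k(n)=\bigl(\prod_{i=0}^{k-1}a_{\epsilon_i(n)}\bigr)(s_0+s_1)$ on the block $0\le n<2^k$, and with lag $2^k$ and $2^k$ terms this yields only $C_2(s_n,N)>N/4-2$ rather than the claimed $N/3-2$. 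So the $\ell=0$ case needs either a weaker constant or an additional idea; ``eventually periodic'' is not a valid way out.
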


\begin{proof}
For any $k\ge 0$ comparing coefficients in $(x+1)^{2^{k+\ell}-2^\ell}h(x,G(x))=0$ at $x^{2n+2^{k+\ell}}$ and $x^{2n+2^{k+\ell}+1}$ provides
\begin{equation}\label{even} s_{2n}+s_{2n+2^{k+\ell}}=a_0(s_n+s_{n+2^{k+\ell-1}}),\quad n\ge 0, 
\end{equation}
and
\begin{equation}\label{odd} s_{2n+1}+s_{2n+2^{k+\ell}+1}=a_1(s_n+s_{n+2^{k+\ell-1}}),\quad n\ge 0,
\end{equation}
respectively.
 
We define the integer $M$ by $2^M\le \frac{N}{2^\ell+1}<2^{M+1}$ and put
for $k=0,1,\ldots,M$
$$\gamma_k=\sum_{n=0}^{2^k-1} (-1)^{s_n+s_{n+2^{k+\ell}}}.$$
By $(\ref{even})$ and $(\ref{odd})$ we get for $1\le k\le M$
\begin{eqnarray*}\gamma_k&=&\sum_{n=0}^{2^{k-1}-1} (-1)^{s_{2n}+s_{2n+2^{k+\ell}}}+\sum_{n=0}^{2^{k-1}-1} (-1)^{s_{2n+1}+s_{2n+2^{k+\ell}+1}}\\
&=&\sum_{n=0}^{2^{k-1}-1} (-1)^{a_0(s_{n}+s_{n+2^{k+\ell-1}})}+\sum_{n=0}^{2^{k-1}-1} (-1)^{a_1(s_{2n+1}+s_{n+2^{k+\ell-1}})}\\
&=&(a_0+a_1)\gamma_{k-1}+(2-a_0-a_1)2^{k-1},
\end{eqnarray*}
$\gamma_0=\pm 1$,
and thus by induction
$$\gamma_k=(a_0+a_1)^k\gamma_0+(2-a_0-a_1)(2^k-1)\quad \text{for } k=1,\ldots,M.$$
Hence, $C_2(s_n,N)\ge |\gamma_M|=2^M>\frac{N}{2^\ell+2}$ if $a_0=a_1=1$ and
$C_2(s_n,N)\ge |\gamma_M|\ge 2^M-2>\frac{N}{2^\ell+2}-2$ otherwise.
\end{proof}

\begin{remark}
The method can be extended to larger families of polynomials $h(x,y)$. However, for the readability we chose a simple family which covers all of the following examples.
\end{remark}

\subsection*{Examples}
\subsubsection*{Pattern sequence}
For a pattern $P\neq(0,\dots,0)$ of length $\ell$ define the sequence $(r_n)$ by
\begin{equation*}
 r_n\equiv e_P(n) \mod 2, \quad r_n\in\F_2,\quad n=0,1,\dots
\end{equation*}
where $e_P(n)$ is the number of occurrences of $P$ in the binary expansion of $n$. 
The sequence $(r_n)$ over $\F_2$ satisfies the following recurrence relation
\begin{equation}\label{eq:recurence}
 r_n=
 \left\{
 \begin{array}{cl}
 r_{\lfloor n/2\rfloor}+1  & \text{if } n\equiv a \mod 2^\ell,\\
 r_{\lfloor n/2\rfloor}  & \text{otherwise,}
 \end{array}
 \right.
 n=1,2,\dots
\end{equation}
with initial value $r_0=0$, where $a$ is the integer $0< a <2^\ell$ such that its  binary expansion corresponds to the pattern $P$.

Classical examples for binary pattern sequences are the {\em Thue-Morse sequence} ($\ell=1$ and $P=1$ ($a=1$)) and the {\em Rudin-Shapiro sequence} ($\ell=2$ and $P=11$ ($a=3$)).

\begin{corollary}
Let $a,\ell$ be integers with $1\leq a<2^\ell$. If $(r_n)$ is the pattern sequence defined by \eqref{eq:recurence}, then
\[
 C_2(r_n,N)>\frac{N}{2^\ell+2}-2 \quad \text{for } N\geq 2^{\ell+1}+4.
\]
\end{corollary}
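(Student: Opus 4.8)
The plan is to derive the corollary from Theorem~\ref{thm:2} by exhibiting, for the pattern sequence $(r_n)$, a characteristic polynomial $h(x,y)$ of exactly the shape required by that theorem, namely $h(x,y)=(x+1)^{2^\ell}((a_1x+a_0)y^2+y)+f(x)$ with $\deg f\le 2^\ell-1$ and $(a_1,a_0)\neq(0,0)$. Once such an $h$ is produced, the stated bound is immediate, since the conclusion of the corollary is literally the conclusion of the theorem with the same $\ell$.

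First I would translate the recurrence \eqref{eq:recurence} into a functional equation for the generating function $G(x)=\sum_{n\ge0} r_nx^n$. The recurrence splits $n$ according to its residue mod $2^\ell$ and relates $r_n$ to $r_{\lfloor n/2\rfloor}$; over $\F_2$ the two operations ``append a binary digit'' correspond to $G(x)\mapsto xG(x^2)$ and $G(x)\mapsto xG(x^2)+G(x^2)$ (even and odd parts), and the additive $+1$ on the single residue class $a\bmod 2^\ell$ contributes a rational term collecting the indices $n\equiv a\pmod{2^\ell}$, i.e.\ something of the form $x^a/(1+x^{2^\ell})$ after accounting for the geometric progression of step $2^\ell$. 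Iterating this digit-by-digit description, or equivalently writing down the degree-$\ell$ functional relation that the pattern-counting structure forces, should yield an identity of the form $(x+1)^{2^\ell}\bigl((a_1x+a_0)G(x)^2+G(x)\bigr)=f(x)$ in $\F_2[x]$, where the Frobenius identity $G(x)^2=\sum r_n x^{2n}$ is what converts the ``halving'' maps into squaring. The coefficients $a_0,a_1\in\F_2$ and the exact polynomial $f$ will depend on the parity of $a$ and on $\ell$, but the key structural point is that they land in the admissible range.

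The main obstacle I anticipate is the bookkeeping needed to verify the two side conditions of the theorem, rather than the existence of the functional equation itself. Specifically I must check that $(a_1,a_0)\neq(0,0)$—this should follow because the pattern $P$ is nonzero, so the recurrence genuinely flips a bit on the class $a$, which prevents both quadratic coefficients from vanishing—and that $\deg f\le 2^\ell-1$, which requires tracking that multiplying the defining relation by $(x+1)^{2^\ell}$ clears the denominator $1+x^{2^\ell}=(x+1)^{2^\ell}$ (using characteristic $2$) and leaves a numerator of controlled degree. I would carry these out by computing the low-order coefficients of $h(x,G(x))=0$ explicitly for the generic class $a$, confirming the degree bound on $f$ comes out to $2^\ell-1$ exactly, and confirming $a_0$ or $a_1$ is nonzero from the $+1$ term.

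Finally, with $h(x,y)$ of the required form in hand, I would simply invoke Theorem~\ref{thm:2}, which gives $C_2(r_n,N)>N/(2^\ell+2)-2$ for $N\ge 2^{\ell+1}+4$, matching the corollary verbatim. As a sanity check I would verify the two classical instances: the Thue-Morse case $\ell=1$, $a=1$, which should reproduce the characteristic polynomial $(x+1)^2(xy^2+y)+x$ consistent with the $h(x,y)=(x+1)^3y^2+(x+1)^2y+x$ recorded in the introduction, and the Rudin-Shapiro case $\ell=2$, $a=3$, confirming both recover a known large correlation measure of order $2$.
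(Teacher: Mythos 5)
Your approach is exactly the paper's: its entire proof is the one-liner that Theorem~\ref{thm:2} applies with $h(x,y)=(x+1)^{2^\ell+1}y^2+(x+1)^{2^\ell}y+x^a$, which comes from the functional equation $G(x)=(1+x)G(x)^2+x^a/(1+x^{2^\ell})$ (multiply by $1+x^{2^\ell}=(x+1)^{2^\ell}$) just as you outline; note that $(a_1,a_0)=(1,1)$ for \emph{every} pattern, independent of the parity of $a$, and $f(x)=x^a$ has degree $a\le 2^\ell-1$. The one slip is in your Thue--Morse sanity check: $(x+1)^2(xy^2+y)+x$ is \emph{not} the polynomial $(x+1)^3y^2+(x+1)^2y+x$ from the introduction (they differ by $(x+1)^2y^2$); the correct factorization is $(x+1)^2\bigl((x+1)y^2+y\bigr)+x$, i.e.\ $a_1x+a_0=x+1$, so carrying out the verification you promise would have flagged and fixed this.
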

\begin{proof}
The result follows from Theorem~\ref{thm:2} with $h(x,y)=(x+1)^{2^\ell+1}y^2+(x+1)^{2^\ell}y+x^a$.
\end{proof}

\subsubsection*{Regular paperfolding sequence}
The value of any given term $v_n\in \F_2$ in the regular paperfolding sequence can be defined as follows. If $n = m\cdot2^k$ where $m$ is odd, then
\begin{equation*}
 v_n=
 \left\{ 
 \begin{array}{cl}
  1& \text{if } m\equiv 1 \mod 4,\\
  0& \text{if } m\equiv 3 \mod 4,
 \end{array}
 \right. n=1,2,\ldots
\end{equation*}
and any $v_0\in \F_2$.

\begin{corollary}
 Let $(v_n)$ be the regular paperfolding sequence. Then
 \[
  C_2(v_n,N)>\frac{N}{6}-2 \quad \text{for } N\geq 12.
\]
\end{corollary}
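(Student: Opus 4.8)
The plan is to exhibit a characteristic polynomial for the generating function $G(x)=\sum_{n\ge0}v_nx^n$ of the regular paperfolding sequence that fits the template of Theorem~\ref{thm:2}, and then invoke that theorem. The target bound $\frac{N}{6}-2$ and the range $N\ge12$ already pin down the parameter: since $\frac{N}{2^\ell+2}=\frac{N}{6}$ and $2^{\ell+1}+4=12$ both force $\ell=2$, I aim to produce an $h(x,y)$ of the shape $(x+1)^{4}\bigl((a_1x+a_0)y^2+y\bigr)+f(x)$ with $\deg f\le3$ and $(a_1,a_0)\ne(0,0)$.

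First I would translate the definition of $(v_n)$ into a recurrence suited to generating functions: writing $n=m\cdot2^k$ with $m$ odd gives $v_{2j}=v_j$ for $j\ge1$, while the odd terms satisfy $v_{4i+1}=1$ and $v_{4i+3}=0$. I would then split $G$ into its even- and odd-index parts. For the even part, the identity $v_{2j}=v_j$ together with the Frobenius relation $\sum_j v_jx^{2j}=G(x)^2$ over $\F_2$ shows that the even part equals $G(x)^2$, and a brief check confirms this holds independently of the free choice of $v_0$. For the odd part, the two residue classes modulo $4$ collapse to the single geometric series $\sum_{i\ge0}x^{4i+1}=x/(1+x^4)$, and over $\F_2$ one has $1+x^4=(x+1)^4$. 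Combining these yields the functional equation $G(x)=G(x)^2+x/(x+1)^4$.

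Clearing the denominator gives $h(x,G(x))=0$ with $h(x,y)=(x+1)^4(y^2+y)+x$. This matches the hypothesis of Theorem~\ref{thm:2} with $\ell=2$, $a_0=1$, $a_1=0$, and $f(x)=x$: indeed $\deg f=1\le 2^\ell-1=3$ and $(a_1,a_0)=(0,1)\ne(0,0)$. The corollary is then immediate from Theorem~\ref{thm:2}.

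The only substantive work is the derivation of the functional equation; the remaining steps are pure pattern matching. Within that derivation, the step most prone to error is the odd-index contribution — correctly identifying which progression carries the $1$'s and rewriting the resulting series as $x/(x+1)^4$ over $\F_2$ — together with the verification that the even part is genuinely $G(x)^2$ for every admissible $v_0$, so that the conclusion applies regardless of the initial value.
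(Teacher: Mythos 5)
Your proposal is correct and follows the paper's route exactly: exhibit a characteristic polynomial of the form required by Theorem~\ref{thm:2} with $\ell=2$ and apply the theorem; your derivation of the functional equation (which the paper omits) is sound, including the observation that the even part equals $G(x)^2$ for either choice of $v_0$. The only discrepancy is that the paper states $h(x,y)=(x+1)^4(y^2+y)+1$ while you obtain $f(x)=x$ rather than $f(x)=1$ --- your computation appears to be the right one, and in any case the difference is immaterial since Theorem~\ref{thm:2} only requires $\deg f\le 2^\ell-1=3$.
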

\begin{proof}
The result follows from Theorem~\ref{thm:2} with $h(x,y)=(x+1)^4(y^2+y)+1$.
\end{proof}

\subsubsection*{A sequence with perfect lattice profile and perfect linear complexity profile}
The generating function $G(x)$ of the sequence $(w_n)$ over $\F_2$ defined by 
\begin{equation}\label{def:w}
w_{2n}=1\quad \mbox{and} \quad w_{2n+1}=w_n+1,\quad n=0,1,\ldots
\end{equation}
satisfies the functional equation $h(x,G(x))=0$ with $h(x,y)=(x+1)(xy^2+y)+1\in\F_2[x,y]$.
This is the only sequence with both a perfect linear complexity profile and a perfect 'lattice profile', see \cite{domewi} for more details. 
Sequences with the first are characterized by $w_0=1$ and $w_{2n+2}=w_{2n+1}+w_n$ but the choice of $w_{2n+1}$ is free for $n\ge 1$, see \cite{Niederreiter88-b}. 
Sequences with the latter are characterized by $w_{2n+1}=w_n+1$ but the choice of any $w_{2n}$ is free, see \cite{domewi}.

\begin{corollary}
 Let $(w_n)$ be the sequence defined by \eqref{def:w}. Then
 \[
  C_2(w_n,N)>\frac{N}{3}-2 \quad \text{for } N\geq 6.
\]
\end{corollary}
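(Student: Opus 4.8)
The plan is to derive this as an immediate special case of Theorem~\ref{thm:2}, reading the parameters directly off the stated functional equation. The strategy is purely to match the polynomial $h(x,y)=(x+1)(xy^2+y)+1$ against the template $h(x,y)=(x+1)^{2^\ell}((a_1x+a_0)y^2+y)+f(x)$ and then confirm that all three hypotheses of the theorem are satisfied.

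First I would set $\ell=0$, so that $(x+1)^{2^\ell}=(x+1)$ matches the single factor of $(x+1)$ in front. Expanding the template then gives $(x+1)((a_1x+a_0)y^2+y)+f(x)$; comparing the inner coefficient of $y^2$ with $x$ forces $a_1=1$ and $a_0=0$, while the remaining constant term forces $f(x)=1$. I would then check the hypotheses: $(a_1,a_0)=(1,0)\neq(0,0)$, the value $\ell=0$ satisfies $\ell\ge 0$, and $\deg f=0\le 2^\ell-1=0$. With all conditions confirmed, Theorem~\ref{thm:2} yields $C_2(w_n,N)>N/(2^\ell+2)-2=N/3-2$ for all $N\ge 2^{\ell+1}+4=6$, which is exactly the asserted bound.

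Since the corollary is a direct instantiation of the theorem, there is no substantive obstacle. The only point that warrants care is the degree condition $\deg f\le 2^\ell-1$, which is saturated at $\ell=0$ (reducing to $\deg f\le 0$); one must simply observe that a nonzero constant $f=1$ is still admissible in this boundary case, so the hypothesis is genuinely met rather than narrowly violated.
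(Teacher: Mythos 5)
Your proof is correct and matches the paper's (implicit) argument exactly: the corollary is a direct application of Theorem~\ref{thm:2} with $h(x,y)=(x+1)(xy^2+y)+1$, i.e.\ $\ell=0$, $(a_1,a_0)=(1,0)$, $f(x)=1$, and all hypotheses including the boundary case $\deg f=0\le 2^\ell-1=0$ are verified correctly. Nothing further is needed.
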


\section{Large correlation measures of order 2 obtained from the generating automaton}\label{sec:general}

In this section we prove lower bounds on the correlation measure of order 2 of automatic sequences in terms of the number of states of the automaton which generates the sequence. 

We recall that a \emph{finite automaton} is defined to be a 6-tuple $M=(Q,\Sigma,\delta,\allowbreak q_0,      \allowbreak \Delta,\tau)$, where $Q$ is a finite set of states, $\Sigma$ is the finite input alphabet, $\delta:Q\times\Sigma\rightarrow Q$ is the transition function, $q_0\in Q$ is the initial state, $\Delta$ is the output alphabet and $\tau:Q \rightarrow \Delta$ is the output function. As usual, we define $\delta(q,xa)=\delta(\delta(q,x),a)$ for all $q\in Q$, $x\in\Sigma^*$ and $a\in\Sigma$. For $k\geq 2$ put $\Sigma_k=\{0,1,\dots,k-1\}$.  Then we say that the sequence $(s_n)$ over a finite alphabet $\Delta$ is \emph{$k$-automatic} if there exists an automaton $(Q,\Sigma_k,\delta,q_0,\Delta,\tau)$ such that  $s_n=\tau(\delta(q_0,(n)_k))$ for all $n\geq 0$ where $(n)_k\in\Sigma_k^*$ is the word consisting of the $k$-ary digits of $n$.

In the following theorem we prove that an automatic sequence which is generated by an automaton with only a few states cannot have good pseudorandomness properties in terms of the correlation measure of order 2.

\begin{theorem}\label{thm:general}
Let $(s_n)$ be a k-automatic binary sequence generated by the finite automaton $(Q,\Sigma_k,\delta,q_0,\Sigma_2,\tau)$.
Then
 \[
  C_2(s_n,N)\geq \frac{N}{k(|Q|+1)} \quad \text{for } N\geq k(|Q|+1).
 \]
\end{theorem}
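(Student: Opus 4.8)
The plan is to exhibit a single pair $D=(d_1,d_2)$ together with a length $M$ for which the correlation sum $V(s_n,M,D)$ equals $M$ exactly, by locating a long block of indices on which $s_n$ and $s_{n+d}$ (with $d=d_2-d_1$) agree identically. The mechanism producing such agreement is that the automaton's output on an input of the form $wz$ depends on $w$ only through the single state $\delta(q_0,w)$; hence if two high-order prefixes drive the automaton into the \emph{same} state, the sequence repeats verbatim over the corresponding low-order block. Throughout I read $(n)_k$ most-significant-digit first, so that high-order digits are consumed first.

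First I would fix a block length $k^j$ (with $j$ optimized at the end) and, for an integer $u\ge 0$, consider the block of indices $[uk^j,(u+1)k^j)$. Writing such an index as $uk^j+v$ with $0\le v<k^j$ and letting $z_v\in\Sigma_k^*$ denote the length-$j$ base-$k$ word for $v$ (padded with leading zeros), the representation of $uk^j+v$ is $(u)_k\,z_v$, so that $s_{uk^j+v}=\tau(\delta(q_u,z_v))$ where $q_u:=\delta(q_0,(u)_k)$. Here I use the standard convention for $k$-automatic sequences that padding with leading zeros does not alter the output, i.e.\ $\delta(q_0,0)=q_0$, which lets the block $u=0$ be treated as the state $q_0$. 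The upshot is that the entire content of block $u$ is determined by the single state $q_u$.

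Next comes the pigeonhole step: among the $|Q|+1$ blocks $u=0,1,\dots,|Q|$ there are at most $|Q|$ distinct values of $q_u$, so there exist $0\le a<b\le |Q|$ with $q_a=q_b$. Taking $d_1=ak^j$ and $d_2=bk^j$, for every $n$ with $0\le n<k^j$ the two indices $n+d_1$ and $n+d_2$ lie in blocks $a$ and $b$ respectively and share the same low part $z_n$, whence $s_{n+d_1}=\tau(\delta(q_a,z_n))=\tau(\delta(q_b,z_n))=s_{n+d_2}$. Consequently every term of $V(s_n,k^j,(d_1,d_2))$ equals $(-1)^0=1$, and the sum is exactly $k^j$.

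Finally I would choose $j$ as large as the admissibility condition $d_2+M<N$ permits. The largest index touched is $(b+1)k^j-1\le (|Q|+1)k^j-1$, so it suffices to take the largest $j$ with $(|Q|+1)k^j\le N$; such $j$ exists because $N\ge k(|Q|+1)$, and its maximality yields $k^j> N/\big(k(|Q|+1)\big)$. Therefore $C_2(s_n,N)\ge k^j> N/\big(k(|Q|+1)\big)$, as claimed, up to a routine off-by-one in the admissibility inequality. The one point genuinely needing care — and hence the main obstacle — is the treatment of the block $u=0$ in combination with leading zeros: obtaining the clean denominator $|Q|+1$ (rather than $|Q|+2$) requires that block $0$ be legitimately identified with the initial state $q_0$, which is precisely where the convention $\delta(q_0,0)=q_0$ enters; restricting the pigeonhole to the blocks $1,\dots,|Q|+1$ dispenses with the convention but weakens the constant to $|Q|+2$.
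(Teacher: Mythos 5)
Your proof is correct and is essentially the paper's argument: both pigeonhole the $|Q|+1$ states reached after reading the high-order part of the index and then exploit that the low-order block of length $k^j$ (resp.\ $k^M$) is determined by that state alone, choosing the same lags $D=(ak^j,bk^j)$ and the same maximal block length. The only difference is presentational — you decompose indices as $(u)_k z_v$ directly, while the paper phrases the same fact via the substitution $\varphi$ and its fixed point $\mathbf{w}$ — and you correctly identify the leading-zero convention $\delta(q_0,0)=q_0$ as the point needing care.
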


\begin{proof}
 We can assume that $\delta(q_0,0)=0$, see \cite[Theorem 5.2.1]{alsh}.
 Let $\varphi:Q^*\rightarrow Q^*$ be defined by $\varphi(q)=\delta(q,0)\delta(q,1)\dots\delta(q,k-1)$ for $q\in Q$ and $\varphi(xy)=\varphi(x)\varphi(y)$. Let $\mathbf{w}=w_0w_1w_2\dots$ be an infinite word over $Q$ which is a fixed point of $\varphi$. Then $\delta(q_0,(n)_k)=w_{n}$ and $\tau(w_n)=s_n$ for all $n\geq 0$, see \cite[Proof of Theorem 6.3.2]{alsh}.
 
 Put
\[
  M=\left\lfloor \frac{\log(N/(|Q|+1))}{\log k}\right\rfloor \geq 1.
\]
By the pigeon hole principle, among the first $|Q|+1$ elements of $(w_n)$ there are two elements having the same value, say $w_i=w_j$, $0\leq i<j\leq |Q|$. Then for the $M$th iteration of $\varphi$, $\varphi^M:Q\rightarrow Q^{k^M}$ we have $\varphi^M(w_i)=\varphi^M(w_j)$, thus $w_{i\cdot k^M+l}=w_{j\cdot k^M+l}$ for $l=0, \dots
 k^M-1$ so $s_{i\cdot k^M+l}=s_{j\cdot k^M+l}$ for $l=0, \dots k^M-1$. Then
 \[
  C_2(s_n,N)\geq V(s_n,N,k^M,D)=\sum_{l=0}^{k^M-1} (-1)^{u_{i k^M+l}+u_{j k^M+l}}= k^M\geq \frac{N}{k(|Q|+1)}
 \]
with lags $D=(i\cdot k^M,j\cdot k^M)$.
\end{proof}

\subsection*{Examples}

The Thue-Morse sequence $(t_n)_{n\geq 0}$ can be defined by a finite automaton with two states, see Figure \ref{fig:TM}. Hence, Theorem \ref{thm:general} yields
\[
   C_2(t_n,N)\geq \frac{N}{6} \quad \text{for } N\geq 6.
\]

 \begin{figure}[h]
\begin{center}
\scalebox{1}{
\begin{tikzpicture}[auto,thick]
 \node (E) at (-1.5,0) [circle] {};
 \node (A) at (0,0) [circle, draw] {$A/0$};
 \node (B) at (3,0) [circle, draw] {$B/1$};
 \draw [->,bend left] (A) to  node {1} (B);
 \draw [->,bend left] (B) to  node {1} (A);
 \path (B) edge [loop above] node {0} (B);
 \path (A) edge [loop above] node {0} (A);
 \draw [->] (E) to  node {} (A);
\end{tikzpicture}
}
\end{center}
\caption{Automaton generating the Thue-Morse sequence.}
\label{fig:TM}
\end{figure}
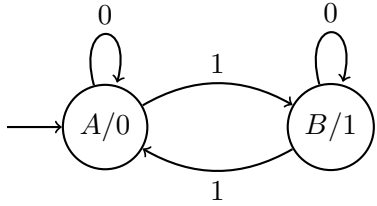

The Rudin-Shapiro sequence $(r_n)_{n\geq 0}$ can be defined by a finite automaton with four states, see Figure \ref{fig:RS}. Hence, Theorem \ref{thm:general} yields
\[
   C_2(t_n,N)\geq \frac{N}{10} \quad \text{for } N\geq 10.
\]

 \begin{figure}[h]
\begin{center}
\scalebox{1}{
\begin{tikzpicture}[auto,thick]
 \node (E) at (-1.5,0) [circle] {};
 \node (A) at (0,0) [circle, draw] {$A/0$};
 \node (B) at (3,0) [circle, draw] {$B/0$};
 \node (C) at (6,0) [circle, draw] {$C/1$};
 \node (D) at (9,0) [circle, draw] {$D/1$};

  \draw [->,bend left] (A) to  node {1} (B);
 \draw [->,bend left] (B) to  node {1} (C);
 \draw [->,bend left] (C) to  node {0} (D);

 \draw [->,bend left] (D) to  node {1} (C);
 \draw [->,bend left] (C) to  node {1} (B);
 \draw [->,bend left] (B) to  node {0} (A);

 \path (D) edge [loop above] node {0} (B);
 \path (A) edge [loop above] node {0} (A);
 \draw [->] (E) to  node {} (A);
\end{tikzpicture}
}
\end{center}
\caption{Automaton generating the Rudin-Shapiro sequence.}
\label{fig:RS}
\end{figure}
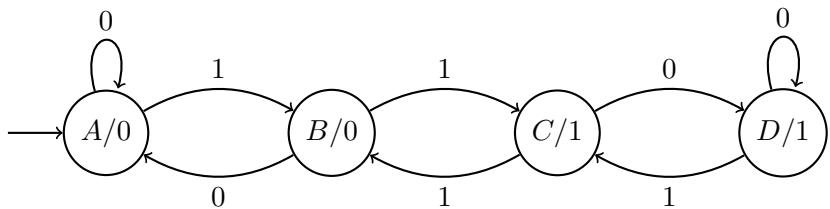

\subsection*{State complexity of binary sequences}

Theorem \ref{thm:general} allows to give lower bounds on the state complexity of binary sequences in terms of the correlation measure of order 2.

Let $k\geq 2$. Then the \emph{$N$th state complexity} $SC_k(s_n,N)$ of a sequence $(s_n)$ over $\F_2$ is the minimum of the number of states of finite $k$-automatons which generate the first $N$ elements. For example, the state complexity of the Thue-Morse sequence $(t_n)$ is $SC_2(t_n,N)=2$ for $N\geq 2$. By Theorem \ref{thm:general} we get lower bound on the $N$th state complexity of binary sequences.
\begin{corollary}\label{cor:SC}
 Let $(s_n)$ be a binary sequence. Then for all $k\geq 2$ we have
 \[
  SC_k(s_n,N)\geq\frac{N}{k\cdot C_2(s_n,N)}-1 \quad \text{for } N\geq 3.
 \]
\end{corollary}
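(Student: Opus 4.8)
The plan is to derive Corollary~\ref{cor:SC} directly from Theorem~\ref{thm:general} by a simple algebraic rearrangement. First I would fix an arbitrary $k\geq 2$ and let $(s_n)$ be a binary sequence. Set $s = SC_k(s_n,N)$, so by definition there exists a finite $k$-automaton with exactly $s$ states generating the first $N$ terms of $(s_n)$. The idea is to apply Theorem~\ref{thm:general} to this witnessing automaton: with $|Q| = s$, the theorem gives
\[
  C_2(s_n,N) \geq \frac{N}{k(s+1)}
\]
as long as the hypothesis $N \geq k(|Q|+1) = k(s+1)$ is met.

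Granting the inequality, the rearrangement is immediate: multiplying through by $k(s+1)$ and dividing by $C_2(s_n,N)$ yields $s+1 \geq N/(k\,C_2(s_n,N))$, hence
\[
  SC_k(s_n,N) = s \geq \frac{N}{k\,C_2(s_n,N)} - 1,
\]
which is exactly the claimed bound. So the core of the argument is just one substitution followed by solving the displayed inequality for $s$.

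The step I expect to require the most care is the bookkeeping around the hypothesis and the degenerate cases, rather than the algebra. Theorem~\ref{thm:general} is only stated for $N \geq k(s+1)$, whereas the corollary asserts the bound for all $N\geq 3$; I would need to check that the remaining small range is covered trivially. If $N < k(s+1)$, then $s+1 > N/k \geq N/(k\,C_2(s_n,N))$ already holds whenever $C_2(s_n,N)\geq 1$, since $C_2(s_n,N)\ge 1$ makes the right-hand side no larger than $N/k$. The one genuinely delicate point is the edge case $C_2(s_n,N)=0$, where the right-hand side of the corollary is undefined (division by zero); here one should note that a binary sequence of length $N\ge 3$ cannot have $C_2(s_n,N)=0$, since for instance $V(s_n,N-1,(0,1))$ together with its complement cannot both vanish, so $C_2(s_n,N)\geq 1$ always. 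I would state this observation explicitly so that the expression $N/(k\,C_2(s_n,N))$ is well defined throughout, and so that the small-$N$ reduction above goes through. With these routine checks in place the corollary follows.
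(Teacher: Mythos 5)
Your proof is correct and is exactly the argument the paper intends (the paper leaves it implicit, simply citing Theorem~\ref{thm:general}): apply the theorem to a minimal automaton with $|Q|=SC_k(s_n,N)$ states and rearrange. Your extra care about the range $N<k(s+1)$ and the fact that $C_2(s_n,N)\geq 1$ for $N\geq 3$ (most simply seen by taking $M=1$ and $D=(0,1)$, which gives a single summand $\pm 1$) is a welcome tightening of details the paper glosses over.
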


As an example we can give  a lower bound on the state complexity of the \emph{Legendre sequence} $(l_n)$ defined by
\[
 l_n=
 \left\{
 \begin{array}{cl}
  0 & \text{if } \left(\frac{n}{p}\right)=1,\\
  1 & \text{otherwise},
 \end{array}
 \right.
\]
where $p>2$ is a prime number and $\left(\frac{\cdot}{p}\right)$ is the Legendre symbol modulo $p$. Mauduit and S\'ark\"ozy \cite{masa-legendre} proved that $C_2(l_n,N)\ll  p^{1/2}\log p$ for $N\leq p$ thus Corollary \ref{cor:SC} gives
\[
  SC_k(l_n,N)\gg\frac{N}{ k\cdot p^{1/2}\log p}-1 \quad \text{for } 3\leq N\leq p.
\]

\section*{Acknowledgment}
The authors would like to thank Christian Mauduit for helpful discussions. 




\end{document}